\def\R{{\mathbb R}}
\def\P{{\mathbb P}}
\def\E{{\mathbb E}}
\newtheorem{thm}{Theorem}[section]
\newtheorem{lem}[thm]{Lemma}
\theoremstyle{remark}
\newtheorem{rem}[thm]{Remark}
\theoremstyle{definition}
\numberwithin{equation}{section}
\title{Johnson-Lindenstrauss lemma for circulant matrices}
\author{Aicke Hinrichs \\
Mathematisches Institut, Universit\"at Jena\\
Ernst-Abbe-Platz 2, 07740 Jena, Germany\\
email:\ a.hinrichs@uni-jena.de\\
\qquad
\\
Jan Vyb\'\i ral \\
Radon Institute for Computational and Applied Mathematics (RICAM)\\
Austrian Academy of Sciences\\
Altenbergerstra\ss e 69\\
A-4040 Linz, Austria\\
email:\ jan.vybiral@oeaw.ac.at}
\begin{document}
\maketitle

\begin{abstract}
We prove a variant of a Johnson-Lindenstrauss lemma for matrices with circulant
structure. This approach allows to minimise the randomness used, is easy to implement and provides
good running times. The price to be paid is the higher dimension of the target space
$k=O(\varepsilon^{-2}\log^3n)$ instead of the classical bound $k=O(\varepsilon^{-2}\log n)$.
\end{abstract}

{\bf AMS Classification: }{52C99, 68Q01}

{\bf Keywords and phrases: }{Johnson-Lindenstrauss lemma, circulant matrices, decoupling lemma}

\section{Introduction}

The classical Johnson-Lindenstrauss lemma may be formulated as follows.
\begin{thm}
Let $\varepsilon\in(0,\frac 12)$ and let $x_1,\dots,x_n\in \R^d$ be arbitrary points.
Let $k=O(\varepsilon^{-2}\log n)$ be a natural number. Then there exists a (linear) mapping
$f:\R^d\to \R^k$ such that
$$
 (1-\varepsilon)||x_i-x_j||_2^2\le ||f(x_i)-f(x_j)||_2^2\le (1+\varepsilon)||x_i-x_j||_2^2
$$
for all $i,j\in\{1,\dots,n\}.$ Here $||\cdot||_2$ stands for the Euclidean norm in $\R^d$ or $\R^k$, respectively.
\end{thm}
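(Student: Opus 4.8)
The plan is to produce the mapping $f$ by a probabilistic argument: I would set $f(x)=\frac{1}{\sqrt k}Ax$, where $A$ is a random $k\times d$ matrix with independent standard Gaussian entries $a_{ij}\sim N(0,1)$, and then show that with positive probability this single random $f$ satisfies all the required inequalities simultaneously. The reduction is that the $\binom n2$ two-sided constraints on $\|f(x_i)-f(x_j)\|_2^2$ are really constraints on the difference vectors $u=x_i-x_j$, so by linearity of $f$ and homogeneity of the squared-norm inequalities it suffices to control, for each fixed $u\in\R^d$, the deviation of $\|f(u)\|_2^2$ from $\|u\|_2^2$.

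The heart of the argument is a concentration estimate. For a fixed $u$ with $\|u\|_2=1$, rotational invariance of the Gaussian distribution makes the entries of $Au$ independent $N(0,1)$, so $\|f(u)\|_2^2=\frac1k\sum_{i=1}^k g_i^2$ with $g_i$ i.i.d.\ standard Gaussian; in particular $\E\,\|f(u)\|_2^2=1$. I would then prove
$$
\P\bigl[\,\bigl|\,\|f(u)\|_2^2-1\,\bigr|>\varepsilon\,\bigr]\le 2\,e^{-c\varepsilon^2 k}
$$
for an absolute constant $c>0$, via the Chernoff/Laplace-transform method: bound $\E\exp\bigl(t\sum_i(g_i^2-1)\bigr)$ through the moment generating function of a $\chi^2$ variable and optimise over $t$, which for small $\varepsilon$ yields the stated subgaussian tail. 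This $\chi^2$ tail bound is the main technical obstacle; everything else is bookkeeping.

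Finally I would assemble the pieces by a union bound. Applying the concentration estimate to each normalised difference $u_{ij}=(x_i-x_j)/\|x_i-x_j\|_2$ (pairs with $x_i=x_j$ being trivial), the probability that some pair violates its two-sided bound is at most $\binom n2\cdot 2\,e^{-c\varepsilon^2 k}$. Choosing $k=C\varepsilon^{-2}\log n$ with $C$ large enough forces this quantity strictly below $1$, so a realisation of $A$ meeting all $\binom n2$ constraints exists, and the corresponding linear map $f$ is the desired one.
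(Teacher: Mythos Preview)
Your proposal is correct and is the standard modern proof of the classical Johnson--Lindenstrauss lemma. Note, however, that the paper does not actually prove this theorem: it is quoted as background (Theorem~1.1), with the original argument of Johnson and Lindenstrauss and the self-contained proof of Dasgupta and Gupta cited for reference. The paper's own work concerns Theorem~2.1, the circulant variant.

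As a minor point of comparison, the paper remarks that the original proof in \cite{JL} takes $f$ to be (a scaling of) an orthogonal projection onto a random $k$-dimensional subspace, whereas you use a dense Gaussian matrix. The two are closely related---for a unit vector $u$, the coordinates of $Au$ are i.i.d.\ standard normals in your setting, while for a random projection the squared norm is a ratio of chi-squares---and both reduce to a $\chi^2$-type tail bound. The specific concentration estimate you need, $\P\bigl(\bigl|\tfrac1k\sum_{i=1}^k g_i^2-1\bigr|>\varepsilon\bigr)\le 2e^{-c\varepsilon^2 k}$, is exactly what the Laurent--Massart inequality (Lemma~2.2 in the paper, with $\alpha_i\equiv 1$) delivers; your outlined Chernoff/MGF derivation recovers the same bound. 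The union bound over $\binom{n}{2}$ pairs then finishes the argument just as you describe.
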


The original proof of Johnson and Lindenstrauss \cite{JL} uses (up to a scaling factor) an orthogonal projection
onto a random $k$-dimensional subspace of $\R^d$. We refer also to \cite{DG} for a beautiful and self-contained
proof. Later on, this lemma found many applications, especially in design of algorithms, where it sometimes 
allows to reduce the dimension of the underlying problem essentially and break the so-called ``curse of dimension'',
cf. \cite{IM} or \cite{IN}.

The evaluation of $f(x)$, where $f$ is a projection onto a random $k$ dimensional subspace, 
is a very time-consuming operation. Therefore, a significant effort was devoted to
\begin{itemize}
\item minimize the running time of $f(x)$,
\item minimize the memory used,
\item minimize the number of random bits used,
\item simplify the algorithm to allow an easy implementation.
\end{itemize}

Achlioptas observed in \cite{A}, that the mapping may also be realised by a matrix, where each component
is selected independently at random with a fixed distribution. This decreases  the time for
evaluation of $f(x)$ essentially.

An important breakthrough was achieved by Ailon and Chazelle in \cite{AC}. Let us briefly describe their
{\it Fast Johnson-Lindenstrauss transform} (FJLT). The FJLT is the product of three matrices
$f(x)=PHDx$, where
\begin{itemize}
\item $P$ is a $k\times d$ matrix, where each component is generated independently at random.
In particular, $P_{i,j}\approx N(0,1)$ with probability
$$
q=\min\left\{\Theta\left(\frac{\log^2n}{d}\right),1\right\}
$$
and $P_{i,j}=0$ with probability $1-q$,
\item $H$ is the $d\times d$ normalised Hadamard matrix,
\item $D$ is a random $d\times d$ diagonal matrix, with each $D_{i,i}$ drawn independently
from $\{-1,1\}$ with probability 1/2.
\end{itemize}

It follows, that with high probability, $f(x)$ may be calculated in time $O(d\log d+qd\varepsilon^{-2}\log n).$

We refer to \cite{M} for a historical overview as well as for an extensive description of the
present ``state of the art''.

In this note we propose another direction to approach the Johnson-Lindenstrauss lemma, namely
we investigate the possibility of taking a partial circulant matrix for $f$ combined with
a random $\pm 1$ diagonal matrix, see the next section for exact definitions.

This transform has a running time of $O(d\log d)$, requires less randomness ($2d$ instead of $kd$ or $(k+1)d$
used in \cite{A,AC2,AC}) and allows a simpler implementation.

Unfortunately, up to now, we were only able to prove the statement with $k=O(\varepsilon^{-2}\log^3 n)$,
compared to the standard value $k=O(\varepsilon^{-2}\log n)$. We leave
the possible improvements of this bound open for further investigations.

\section{Circulant matrices}

We study the question (which to our knowledge has not been addressed in the literature before),
whether $f$ in the Johnson-Lindenstrauss lemma may be chosen as a circulant matrix. 
Let us give the necessary notation.

Let $a=(a_0,\dots,a_{d-1})$ be independent identically distributed random variables. 
We denote by $M_{a,k}$ the partial circulant matrix
$$M_{a,k}=\left(
\begin{matrix}
a_0 & a_1 & a_2 & \dots & a_{d-1}\\
a_{d-1} & a_0 & a_1 & \dots & a_{d-2}\\
a_{d-2} & a_{d-1} & a_{0} &\dots & a_{d-3}\\
\vdots & \vdots & \vdots &\ddots & \vdots\\
a_{d-k+1} & a_{d-k+2} & a_{d-k+3} & \dots & a_{d-k}
\end{matrix}
\right).
$$

Furthermore, if $\varkappa=(\varkappa_0,\dots,\varkappa_{d-1})$ are independent Bernoulli
variables, we put
$$
 D_{\varkappa}=\left(\begin{matrix}
\varkappa_0 & 0  & \dots & 0\\
0 & \varkappa_1  & \dots & 0\\
\vdots & \vdots  &\ddots & \vdots\\
0 & 0 & \dots & \varkappa_{d-1}
\end{matrix}\right).
$$

\begin{thm}\label{thm1}
Let $x_1,\dots,x_n$ be arbitrary points in $\R^d$, let $\varepsilon\in (0,\frac 12)$
and let $k=O(\varepsilon^{-2}\log^3 n)$ be a natural number.
Let $a=(a_0,\dots,a_{d-1})$ be independent Bernoulli variables or independent
normally distributed variables. Let $M_{a,k}$ and $D_\varkappa$ 
be as above and put $f(x)=\frac{1}{\sqrt k}M_{a,k} D_{\varkappa}x$.

Then with probability at least 2/3 the following holds
$$
(1-\varepsilon)||x_i-x_j||_2^2\le ||f(x_i)-f(x_j)||_2^2\le (1+\varepsilon)||x_i-x_j||_2^2,\qquad i,j=1,\dots,n.
$$
\end{thm}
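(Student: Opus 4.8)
The plan is to exploit linearity: since $f$ is linear, $f(x_i)-f(x_j)=f(x_i-x_j)$, so it suffices to show that for an arbitrary fixed vector $x\in\R^d$ one has $\big|\,\|f(x)\|_2^2-\|x\|_2^2\,\big|\le\varepsilon\|x\|_2^2$ with probability at least $1-\frac{1}{3}\binom{n}{2}^{-1}$, and then to take a union bound over the $\binom{n}{2}$ differences $x=x_i-x_j$. By homogeneity I may assume $\|x\|_2=1$. Writing $y=D_\varkappa x$ (so $y_\ell=\varkappa_\ell x_\ell$) and $v^{(i)}\in\R^d$ for the shifted vector $v^{(i)}_m=y_{(m+i)\bmod d}$, the $i$-th coordinate of $M_{a,k}D_\varkappa x$ equals $\langle a,v^{(i)}\rangle$, so that
\[
\|f(x)\|_2^2=\frac1k\sum_{i=0}^{k-1}\langle a,v^{(i)}\rangle^2=a^{T}Ma,\qquad M=\frac1k\sum_{i=0}^{k-1}v^{(i)}(v^{(i)})^{T}.
\]
Since each $\|v^{(i)}\|_2=\|x\|_2=1$ one has $\operatorname{tr}M=1$, hence $\E_a\,a^{T}Ma=\operatorname{tr}M=1$ for mean-zero, variance-one independent entries; thus $f$ is norm-preserving in expectation and the whole problem reduces to a concentration estimate for the quadratic form $a^{T}Ma$ around its mean, where the symmetric positive semidefinite matrix $M=M(\varkappa)$ is itself random through $\varkappa$.

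Next I would fix $\varkappa$ and concentrate over $a$. For Rademacher $a$ the diagonal contribution is deterministic, $\sum_m a_m^2 M_{m,m}=\operatorname{tr}M=1$, so $a^{T}Ma-1=\sum_{m\ne m'}a_ma_{m'}M_{m,m'}$ is a homogeneous Rademacher chaos of order two. This is exactly where the decoupling lemma enters: it lets me replace this chaos by the decoupled form $a^{T}M_{\mathrm{off}}\,a'$ with $a'$ an independent copy and $M_{\mathrm{off}}$ the off-diagonal part of $M$, whose conditional distribution given $a$ is that of a linear form to which a Khintchine/Gaussian moment bound applies. The outcome is a Hanson--Wright type estimate
\[
\P_a\!\left(\big|a^{T}Ma-1\big|>t\right)\le 2\exp\!\Big(-c\,\min\big\{t^2/\|M\|_{HS}^2,\ t/\|M\|_{op}\big\}\Big),
\]
valid uniformly in $\varkappa$ (in the Gaussian case the extra diagonal term $\sum_m(a_m^2-1)M_{m,m}$ is handled the same way and is dominated by the same two norms). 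Equivalently, in moment form, $\big(\E_a|a^{T}Ma-1|^p\big)^{1/p}\lesssim \sqrt p\,\|M\|_{HS}+p\,\|M\|_{op}$ for $p\ge2$.

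The heart of the argument, and the step I expect to be the main obstacle, is to show that the random norms $\|M\|_{HS}$ and $\|M\|_{op}$ are small with very high probability over $\varkappa$. A direct computation gives $\E_\varkappa\|M_{\mathrm{off}}\|_{HS}^2\le 1/k$: indeed, for $m\ne m'$ the entry $M_{m,m'}=\frac1k\sum_i\varkappa_{m+i}\varkappa_{m'+i}\,x_{m+i}x_{m'+i}$ has zero mean, and the orthogonality of distinct pairs of signs yields $\E_\varkappa M_{m,m'}^2\approx\frac1{k^2}\sum_i x_{m+i}^2x_{m'+i}^2$, whence $\sum_{m\ne m'}\E_\varkappa M_{m,m'}^2\le\frac1{k^2}\sum_i\|x\|_2^4=\frac1k$; the deterministic diagonal obeys $\sum_m M_{m,m}^2\le\max_m M_{m,m}\le\frac1k$ as well. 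The difficulty is that a bound in expectation is far too weak: to afford the union bound over the $\binom{n}{2}$ pairs I need these norms controlled at the level $\sim n^{-2}$, i.e.\ I must estimate the $p$-th moments of $\|M_{\mathrm{off}}\|_{HS}^2$ and of $\|M\|_{op}$ for $p\sim\log n$. Since $\|M_{\mathrm{off}}\|_{HS}^2$ and $\|M\|_{op}$ are themselves polynomials in $\varkappa$ (a second layer of chaos), I would again invoke decoupling together with the moment estimates for Rademacher chaos to obtain bounds of the form $\big(\E_\varkappa\|M\|_{HS}^p\big)^{1/p}\lesssim \mathrm{polylog}(n)/\sqrt k$ and a comparable bound for $\|M\|_{op}$, valid with failure probability at most $n^{-2}$.

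Finally I would assemble the two layers. Combining the conditional moment bound with the $\varkappa$-moment bounds on $\|M\|_{HS},\|M\|_{op}$ and applying Markov's inequality at $p\sim\log n$, the deviation of $\|f(x)\|_2^2$ from $1$ is at most $\lesssim p\cdot\mathrm{polylog}(n)/\sqrt k$, which is $\le\varepsilon$ precisely when $k\gtrsim\varepsilon^{-2}\log^3 n$. The cube of the logarithm is the visible cost of the two nested chaos expansions --- one power of $\log n$ from the union bound, and the remaining two from the fact that the rows of $M_{a,k}$ share the single random vector $a$, so that concentration must be extracted from the quadratic form $a^{T}M(\varkappa)a$ rather than from independent coordinates as in the classical lemma. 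Sharpening the analysis of $\|M\|_{HS}$ and $\|M\|_{op}$ is the obvious place where the exponent of the logarithm might be lowered.
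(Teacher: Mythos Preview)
Your reduction to a single unit vector, the representation $\|f(x)\|_2^2=a^{T}M(\varkappa)a$ with $\operatorname{tr}M=1$, and the plan ``moment bound $+$ Markov $+$ union bound'' are all correct and match the paper's skeleton. But the technical route you take is different from the paper's, and as written it contains a gap at the operator--norm step.

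The paper does \emph{not} condition on $\varkappa$ and invoke Hanson--Wright. It splits $\|M_{a,k}D_\varkappa x\|_2^2=I+II$ into the diagonal part $I=\sum_i a_i^2\alpha_i$ with $\alpha_i=\sum_{j<k}x_{j+i}^2$ (handled by the Laurent--Massart tail bound, Lemma~\ref{LemLM}) and the off--diagonal chaos $II$, and then bounds $\E_{a,\varkappa}|II|^p$ \emph{jointly}: decouple twice (once in $a$, once in $\varkappa$) via Lemma~\ref{decoup}, then alternate Khintchine in $\varkappa$, Minkowski, Khintchine in $\varkappa'$, Minkowski, and finally Khintchine in $a,a'$ on the inner sum $\sum_{j=0}^{k-1}a_{v-j}a'_{v'-j}$. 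This yields $(\E|II|^p)^{1/p}\le c\,p^{3/2}\sqrt{k+p}$ directly, with no matrix norm ever appearing; choosing $p\asymp(k\varepsilon^2)^{1/3}$ gives Lemma~\ref{lem1}.

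Your gap is the sentence ``a comparable bound for $\|M\|_{op}$''. Decoupling and Rademacher--chaos moment estimates control scalars, not the operator norm of a $d\times d$ random matrix; you have given no mechanism for that bound. If you fall back on the trivial $\|M\|_{op}\le\|M\|_{HS}$, then the Hanson--Wright term $p\,\|M\|_{op}$ becomes $p\,\|M\|_{HS}$. The entrywise chaos estimate you describe actually gives $(\E_\varkappa\|M_{\mathrm{off}}\|_{HS}^p)^{1/p}\lesssim p/\sqrt{k}$ (each entry $M_{m,m'}$ is a degree--$2$ polynomial in $\varkappa$, so hypercontractivity costs a factor $p$), and hence
\[
(\E_{a,\varkappa}|a^{T}Ma-1|^p)^{1/p}\ \lesssim\ \sqrt{p}\cdot\frac{p}{\sqrt k}\ +\ p\cdot\frac{p}{\sqrt k}\ \asymp\ \frac{p^{2}}{\sqrt k},
\]
not $p^{3/2}/\sqrt{k}$. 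With $p\sim\log n$ this forces $k\gtrsim\varepsilon^{-2}\log^{4}n$, one logarithm worse than claimed. To recover $\log^{3}n$ along your route you would need a genuinely sharper high--moment bound on $\|M(\varkappa)\|_{op}$, which is a separate random--matrix argument and not a consequence of the tools you list. The paper's joint moment computation sidesteps this issue entirely.
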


The preconditioning of $x$ using $D_{\varkappa}$ seems to be necessary and we 
shall comment on this point later on.
Its role may be compared with the use of the random Fourier transform in \cite{AC}.

In contrast to the above mentioned variants of the Johnson-Lindenstrauss lemma,
the coordinates of $f(x)$ are now no longer independent random variables.
Our approach ``decouples''
the dependence caused by the circulant structure. It resembles in some aspects the methods
used recently in compressed sensing, cf. \cite{B1,B2,R}.

First, we recall the Lemma 1 from Section 4.1 of \cite{LM}
(cf. also Lemma 2.2 of \cite{M}), which shall be useful later on.

\begin{lem}\label{LemLM} Let
$$
Z=\sum_{i=1}^D \alpha_i(a_i^2-1),
$$
where $a_i$ are i.i.d. normal variables and $\alpha_i$ 
are nonnegative real numbers. Then for any $t>0$
\begin{align*}
{\mathbb P}(Z\ge 2||\alpha||_2\sqrt{t}+2||\alpha||_\infty t)&\le \exp(-t),\\
{\mathbb P}(Z\le -2||\alpha||_2\sqrt{t})&\le \exp(-t).
\end{align*}
\end{lem}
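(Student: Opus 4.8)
Since the lemma is quoted from \cite{LM}, the natural proof is the exponential--moment (Chernoff) method, which is the route I would take. The starting point is the Laplace transform of a single squared Gaussian, $\mathbb{E}\,e^{s a^2}=(1-2s)^{-1/2}$ for $s<\tfrac12$. By independence of the $a_i$, for every $\lambda$ with $2\lambda\|\alpha\|_\infty<1$ one gets
$$
\mathbb{E}\,e^{\lambda Z}=\prod_{i=1}^{D} e^{-\lambda\alpha_i}(1-2\lambda\alpha_i)^{-1/2},
$$
so that Markov's inequality applied to $e^{\lambda Z}$ yields, for $\lambda>0$, the bound $\mathbb{P}(Z\ge x)\le\exp\bigl(-\lambda x+\psi(\lambda)\bigr)$, where the log-moment generating function is $\psi(\lambda)=\sum_{i=1}^{D}\bigl(-\lambda\alpha_i-\tfrac12\log(1-2\lambda\alpha_i)\bigr)$. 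The whole argument thus reduces to controlling $\psi$.

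The analytic heart of the upper tail is the elementary inequality $-u-\tfrac12\log(1-2u)\le u^2/(1-2u)$, valid for $0\le u<\tfrac12$, which one checks by noting that the difference vanishes at $u=0$ and has derivative $2u^2/(1-2u)^2\ge0$. Applying it termwise with $u=\lambda\alpha_i$ and estimating $\alpha_i\le\|\alpha\|_\infty$ in the denominators gives
$$
\psi(\lambda)\le\frac{\lambda^2\|\alpha\|_2^2}{1-2\lambda\|\alpha\|_\infty}.
$$
It then remains to substitute the target value $x=2\|\alpha\|_2\sqrt t+2\|\alpha\|_\infty t$ and to minimise $-\lambda x+\psi(\lambda)$ over $\lambda\in(0,1/(2\|\alpha\|_\infty))$. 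A direct computation shows that the minimiser is $\lambda=\sqrt t/(\|\alpha\|_2+2\|\alpha\|_\infty\sqrt t)$, for which the exponent collapses to exactly $-t$; this proves the first inequality.

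The lower tail is softer because the relevant transform has no pole. Applying the same scheme to $-Z$, for $\lambda>0$ one has $\mathbb{E}\,e^{-\lambda Z}=\prod_i e^{\lambda\alpha_i}(1+2\lambda\alpha_i)^{-1/2}$, and the companion inequality $u-\tfrac12\log(1+2u)\le u^2$ (valid for all $u\ge0$, with no upper restriction) gives $\mathbb{P}(Z\le -x)\le\exp(-\lambda x+\lambda^2\|\alpha\|_2^2)$ for every $\lambda>0$. Minimising over $\lambda$, with minimiser $\lambda=x/(2\|\alpha\|_2^2)$, produces $\exp(-x^2/(4\|\alpha\|_2^2))$, and $x=2\|\alpha\|_2\sqrt t$ returns $\exp(-t)$; the absence of any $\|\alpha\|_\infty$ term reflects that the left tail is sub-Gaussian at every scale. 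The one genuinely delicate point is the asymmetric right tail: the constraint $\lambda<1/(2\|\alpha\|_\infty)$ keeps $\lambda$ away from the singularity of the transform and is exactly what forces the extra linear contribution $2\|\alpha\|_\infty t$, and recovering the sharp constant requires the precise elementary inequality above rather than a cruder second-order expansion.
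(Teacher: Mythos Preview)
Your argument is correct and is precisely the Chernoff-type proof given in Laurent--Massart \cite{LM}; the paper itself does not supply a proof of this lemma but merely quotes it. The two key elementary inequalities you use, $-u-\tfrac12\log(1-2u)\le u^2/(1-2u)$ and $u-\tfrac12\log(1+2u)\le u^2$, together with the explicit choice of $\lambda$, reproduce the Laurent--Massart computation verbatim, so there is nothing to compare.
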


Furthermore, we shall use the decoupling lemma of \cite[Proposition 1.9]{BT}.

\begin{lem}\label{decoup}
Let $\xi_0,\dots,\xi_{d-1}$ be independent random variables with 
${\mathbb E}\,\xi_0=\dots={\mathbb E}\,\xi_{d-1}=0$ and let $\{x_{i,j}\}_{i,j=0}^{d-1}$
be a double sequence of real numbers. Then for $1\le p <\infty$
$$
{\mathbb E}\biggl|\sum_{i\not=j}x_{i,j}\xi_i \xi_j\biggr|^p
\le 4^p {\mathbb E}\biggl|\sum_{i\not= j}x_{i,j}\xi_i\xi'_j\biggr|^p,
$$
where $(\xi'_0,\dots,\xi'_{d-1})$ denotes an independent copy of
$(\xi_0,\dots,\xi_{d-1})$.
\end{lem}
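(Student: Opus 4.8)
The plan is to prove this by the classical random-selection (symmetrization-by-partition) argument for decoupling quadratic forms. Write $U=\sum_{i\neq j}x_{i,j}\xi_i\xi_j$ for the coupled sum and $T=\sum_{i\neq j}x_{i,j}\xi_i\xi_j'$ for the fully decoupled one. First I would introduce independent selectors $\delta_0,\dots,\delta_{d-1}$, taking the values $0$ and $1$ each with probability $1/2$ and independent of everything else, and split the index set into $A=\{i:\delta_i=1\}$ and its complement $B$. The key elementary observation is that for any fixed pair $i\neq j$ one has $\E_\delta[\delta_i(1-\delta_j)]=1/4$, so that
$$U=4\,\E_\delta\Bigl[\sum_{i\neq j}\delta_i(1-\delta_j)x_{i,j}\xi_i\xi_j\Bigr].$$
Applying Jensen's inequality to the convex function $t\mapsto |t|^p$ and pulling $\E_\delta$ outside then yields
$$\E_\xi|U|^p\le 4^p\,\E_\delta\,\E_\xi\Bigl|\sum_{i\in A,\,j\in B}x_{i,j}\xi_i\xi_j\Bigr|^p,$$
reducing the problem to controlling, for each fixed partition, the sum running over the disjoint index blocks $A$ and $B$.

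The second step is to decouple on these disjoint blocks. Since $A\cap B=\emptyset$, the families $(\xi_i)_{i\in A}$ and $(\xi_j)_{j\in B}$ are independent, so replacing $(\xi_j)_{j\in B}$ by the corresponding independent copies $(\xi_j')_{j\in B}$ leaves the joint law of the summands, and hence the $p$-th moment, unchanged:
$$\E_\xi\Bigl|\sum_{i\in A,\,j\in B}x_{i,j}\xi_i\xi_j\Bigr|^p=\E_{\xi,\xi'}\Bigl|\sum_{i\in A,\,j\in B}x_{i,j}\xi_i\xi_j'\Bigr|^p.$$

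The final and most delicate step is to enlarge the restricted decoupled sum to the full one $T$. Here I would condition on the $\sigma$-algebra $\mathcal G$ generated by $(\xi_i)_{i\in A}$ and $(\xi_j')_{j\in B}$ and verify that $\E[T\mid\mathcal G]=\sum_{i\in A,\,j\in B}x_{i,j}\xi_i\xi_j'$. Indeed, in any term $x_{i,j}\xi_i\xi_j'$ of $T$ with $(i,j)\notin A\times B$, at least one factor is a variable independent of $\mathcal G$ with mean zero (if $i\notin A$ then $\xi_i$ is such a factor, while if $j\notin B$ then $\xi_j'$ is), so that term has vanishing conditional mean. Conditional Jensen then gives $\E_{\xi,\xi'}\bigl|\sum_{i\in A,\,j\in B}x_{i,j}\xi_i\xi_j'\bigr|^p\le\E_{\xi,\xi'}|T|^p$, and since this bound no longer depends on $\delta$ the outer average over $\delta$ disappears, producing exactly $\E|U|^p\le 4^p\,\E|T|^p$. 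The point demanding the most care is this conditioning argument: one must check that the mean-zero hypothesis kills precisely the unwanted cross terms and keep correct track of which factor survives in $\mathcal G$ across all four position types according to whether $i$ and $j$ lie in $A$ or in $B$. The independence invoked in the second step, though it looks innocuous, likewise rests essentially on the disjointness produced by the random partition.
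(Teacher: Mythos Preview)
Your argument is correct and is precisely the standard random-partition proof of the decoupling inequality: the identity $\E_\delta[\delta_i(1-\delta_j)]=1/4$ for $i\neq j$, Jensen, replacement of $(\xi_j)_{j\in B}$ by $(\xi'_j)_{j\in B}$ on disjoint blocks, and the conditional Jensen step are all handled correctly, including the case analysis showing that every term of $T$ outside $A\times B$ has zero conditional mean given $\mathcal G$.

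As for comparison: the paper does not actually prove this lemma. It merely quotes it as \cite[Proposition~1.9]{BT} (Bourgain--Tzafriri) and uses it as a black box. Your proof is exactly the argument behind that proposition, so in effect you have supplied what the paper only cites.
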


The key role in the proof of the Johnson-Lindenstrauss lemma is played by the following
estimates.

\begin{lem}\label{lem1}
Let $k\le d$ be natural numbers and let $\varepsilon \in (0, \frac12)$. 
Let $a=(a_0,\dots,a_{d-1})$, $M_{a,k}$ and $D_\varkappa$ 
be as in Theorem \ref{thm1} and let $x\in \R^d$ be a unit vector.
Put $f(x)=M_{a,k} D_{\varkappa}x$.

Then there is a constant $c$, independent on $k, d,\varepsilon$ and $x$, such that
$$
 {\mathbb P_{a,\varkappa}}\Bigl(||f(x)||_2^2\ge (1+\varepsilon)k\Bigr)\le \exp(-c(k\varepsilon^{2})^{1/3})
$$
and
$$
 {\mathbb P_{a,\varkappa}}\Bigl(||f(x)||_2^2\le (1-\varepsilon)k\Bigr)\le\exp(-c(k\varepsilon^{2})^{1/3}).
$$
\end{lem}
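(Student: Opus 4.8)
The plan is to expand $\|f(x)\|_2^2$ and split it into a \emph{diagonal} part, which carries the mean $k$, and an \emph{off-diagonal} part, which is a quadratic chaos and carries essentially all the difficulty. Writing the $\ell$-th coordinate as $f(x)_\ell=\sum_m a_{(m-\ell)\bmod d}\,\varkappa_m x_m$ and squaring, I would write $\|f(x)\|_2^2=A+B$, where
$$
A=\sum_m x_m^2\sum_{\ell=0}^{k-1}a_{(m-\ell)\bmod d}^2,\qquad
B=\sum_{m\neq m'}\varkappa_m\varkappa_{m'}x_m x_{m'}\sum_{\ell=0}^{k-1}a_{(m-\ell)\bmod d}\,a_{(m'-\ell)\bmod d}.
$$
Since $\varkappa_m^2=1$ and $\E a_p^2=1$, a direct computation gives $\E A=k$ and $\E B=0$ (the latter because for $m\neq m'$ the signs $\varkappa_m,\varkappa_{m'}$ are independent and mean zero), so $\E\|f(x)\|_2^2=k$ as required. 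The two deviation events $\{\|f(x)\|_2^2\ge(1+\varepsilon)k\}$ and $\{\|f(x)\|_2^2\le(1-\varepsilon)k\}$ are then controlled through $\{|A-k|\ge\varepsilon k/2\}$ and $\{|B|\ge\varepsilon k/2\}$ via a union bound.

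For the diagonal term I would reindex by $p=(m-\ell)\bmod d$ to get $A-k=\sum_{p}\alpha_p(a_p^2-1)$ with $\alpha_p=\sum_{\ell=0}^{k-1}x_{(p+\ell)\bmod d}^2\ge0$, so that $\|\alpha\|_\infty\le\|x\|_2^2=1$, $\sum_p\alpha_p=k$, and hence $\|\alpha\|_2^2\le\|\alpha\|_\infty\sum_p\alpha_p\le k$. Applying Lemma \ref{LemLM} with $t\asymp\varepsilon^2 k$ yields both tails $\P(|A-k|\ge\varepsilon k/2)\le e^{-c\varepsilon^2 k}$, which is \emph{better} than the target rate. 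In the Bernoulli case this step is vacuous, since then $a_p^2\equiv1$ forces $A\equiv k$.

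The off-diagonal term $B$ is the crux. Here I would bound the moments $\E_{a,\varkappa}|B|^r$ and optimise. Conditioning on $\varkappa$, the inner coefficients form a matrix $C=C(\varkappa)$ with entries $c_{i,j}=\sum_{\ell}\varkappa_{i+\ell}\varkappa_{j+\ell}x_{i+\ell}x_{j+\ell}$ (indices mod $d$) and $B=\sum_{i\neq j}c_{i,j}a_ia_j$. Applying Lemma \ref{decoup} replaces $a_ia_j$ by $a_ia_j'$; the resulting sum is, for fixed $a',\varkappa$, a linear Gaussian form in $a$ of variance $\|Ca'\|_2^2$, so $\E_a|\,\cdot\,|^r\le(c\sqrt r)^r\,\E_{a'}\|Ca'\|_2^r$. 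Since $\|Ca'\|_2^2=\langle C^\top C a',a'\rangle$ is a nonnegative Gaussian quadratic form, a second application of Lemma \ref{LemLM} converts its tail into the moment estimate $(\E_{a'}\|Ca'\|_2^r)^{1/r}\lesssim\|C\|_{\mathrm{HS}}+\sqrt r\,\|C\|_{\mathrm{op}}$. Combining, $(\E_a|B|^r)^{1/r}\lesssim\sqrt r\,\|C\|_{\mathrm{HS}}+r\,\|C\|_{\mathrm{op}}$. It remains to average over $\varkappa$: a pairing-of-signs computation gives $\E_\varkappa\|C\|_{\mathrm{HS}}^2=\sum_{i\neq j}\sum_\ell x_{i+\ell}^2 x_{j+\ell}^2\le k$, and higher moments of $\|C\|_{\mathrm{HS}}$ are controlled by hypercontractivity, each $c_{i,j}$ being a degree-two Rademacher chaos. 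The aim of this bookkeeping is the bound $(\E_{a,\varkappa}|B|^r)^{1/r}\lesssim r^{3/2}\sqrt k$; then Markov's inequality gives $\P(|B|\ge\varepsilon k/2)\le\bigl(2r^{3/2}\sqrt k/(\varepsilon k)\bigr)^r$, and the choice $r\asymp(k\varepsilon^2)^{1/3}$ produces exactly $\exp(-c(k\varepsilon^2)^{1/3})$. In the Bernoulli case one replaces the Gaussian moment inputs by Khintchine/hypercontractivity estimates, the decoupling step being identical.

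The main obstacle I anticipate is precisely the $\varkappa$-averaging of the norms of $C$, and in particular the operator-norm contribution $r\,\|C\|_{\mathrm{op}}$ together with the high moments of $\|C\|_{\mathrm{HS}}$: one must show that these cost no more than an extra factor $r$ beyond the Gaussian $\sqrt r$, so as to land at $r^{3/2}\sqrt k$ and no worse. This is where the circulant dependence genuinely bites, and it is the source of the degradation from the optimal exponent $\varepsilon^2 k$ to $(\varepsilon^2 k)^{1/3}$; a naive bound treating $B$ as a generic order-four chaos would give an even weaker exponent, so the separate, decoupled treatment of the Gaussian and the Rademacher randomness is essential.
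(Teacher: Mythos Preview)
Your diagonal/off-diagonal split, the Laurent--Massart treatment of the diagonal term (with the observation that the Bernoulli case is trivial there), and the final Markov optimisation all match the paper exactly.

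The divergence is in the off-diagonal term, and it is not merely cosmetic. The paper applies Lemma~\ref{decoup} \emph{twice}, decoupling both $a$ and $\varkappa$. After the substitution $v=j+i$, $v'=j+i'$ the decoupled sum is \emph{linear} in each of $\varkappa$ and $\varkappa'$, so two applications of Khintchine (each costing a factor $\sqrt p$) followed by Minkowski reduce everything to the scalar quantity $\bigl(\E_{a,a'}\bigl|\sum_{j=0}^{k-1}a_{v-j}a'_{v'-j}\bigr|^p\bigr)^{1/p}\lesssim\sqrt{pk}$. This yields $(\E|II|^p)^{1/p}\lesssim p\cdot\sqrt{pk}=p^{3/2}\sqrt k$ directly, with no matrix norms appearing at all.

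You decouple only $a$ and condition on $\varkappa$, which forces you through the Hanson--Wright bound $\sqrt r\,\|C(\varkappa)\|_{\mathrm{HS}}+r\,\|C(\varkappa)\|_{\mathrm{op}}$. The Hilbert--Schmidt part is fine: each $c_{i,j}$ is a homogeneous degree-two Rademacher chaos, so Minkowski plus hypercontractivity give $(\E_\varkappa\|C\|_{\mathrm{HS}}^r)^{1/r}\lesssim r\sqrt k$, and the first term lands on $r^{3/2}\sqrt k$ as desired. The gap is the operator-norm term. With nothing better than $\|C\|_{\mathrm{op}}\le\|C\|_{\mathrm{HS}}$ you obtain $r\cdot r\sqrt k=r^2\sqrt k$, which after Markov gives only $\exp\bigl(-c(k\varepsilon^2)^{1/4}\bigr)$. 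To recover the stated $(k\varepsilon^2)^{1/3}$ you would need $(\E_\varkappa\|C(\varkappa)\|_{\mathrm{op}}^r)^{1/r}\lesssim\sqrt{rk}$ uniformly in unit vectors $x$; since $C=Y^\top Y-\mathrm{diag}(\alpha)$ with $Y$ the $k\times d$ partial circulant built from $D_\varkappa x$, this is a genuine random-matrix estimate that you have not supplied and that does not follow from the tools you cite. The paper's second decoupling is precisely the device that eliminates this obstacle, because it turns the quadratic $\varkappa$-dependence of $c_{i,j}$ into a bilinear one, so that ordinary Khintchine (cost $\sqrt p$, not $p$) applies.
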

\begin{proof}
Let $S:\R^d \to \R^d$ denote the shift operator
$$
 S(x_0,x_1,\dots,x_{d-1})=(x_{d-1},x_0,x_1,\dots,x_{d-2}),\quad x\in\R^d.
$$
Then
$$
||f(x)||_2^2=||M_{a,k} D_{\varkappa} x||_2^2 = \sum_{j=0}^{k-1}|\langle S^j a,D_\varkappa x\rangle|^2
=\sum_{j=0}^{k-1}\left(\sum_{i=0}^{d-1}a_i\varkappa_{j+i}x_{j+i}\right)^2
=I + II,
$$
where
$$
I=\sum_{i=0}^{d-1}a_i^2 \cdot \sum_{j=0}^{k-1}x^2_{j+i}
$$
and
$$
II=\sum_{j=0}^{k-1}\sum_{i\not=i'} a_i a_{i'}\varkappa_{j+i}\varkappa_{j+i'}x_{j+i}x_{j+i'}.
$$
Here (and any time later) the summation in the index is to be understood modulo $d$.

The decoupling of the circulant matrix is based on
\begin{equation}\label{eq:fin1}
\P_{a,\varkappa}\Bigl(||M_{a,k}D_\varkappa x||_2^2\ge (1+\varepsilon)k\Bigr)\le
\P_{a}(I\ge (1+\varepsilon/2)k)+\P_{a,\varkappa}(II\ge \varepsilon k/2)
\end{equation}
and
\begin{equation}\label{eq:fin2}
\P_{a,\varkappa}\Bigl(||M_{a,k}D_\varkappa x||_2^2\le (1-\varepsilon)k\Bigr)\le
\P_{a}(I\le (1-\varepsilon/2)k)+\P_{a,\varkappa}(II\le -\varepsilon k/2).
\end{equation}

We use Lemma \ref{LemLM} to estimate the diagonal term $I$.

We choose $\alpha_i=\displaystyle\sum_{j=0}^{k-1}x^2_{j+i}$ 
and get $||\alpha||_1=k, ||\alpha||_\infty\le 1$
and hence $||\alpha||_2\le \sqrt k$.
This leads to
\begin{equation}\label{eq:11}
{\mathbb P_a}(I\le k-2\sqrt{kt})\le \exp(-t)
\end{equation}
and
\begin{equation}\label{eq:12}
{\mathbb P_a}(I\ge k+2\sqrt{kt}+2t)\le \exp(-t).
\end{equation}
We set $\varepsilon k/2=2\sqrt{kt}$, i.e. $t=\varepsilon^2k/16$, in \eqref{eq:11}
and obtain
\begin{equation}\label{eq:fin3}
{\mathbb P}_{a}(I\le (1-\varepsilon/2)k)\le \exp(-\varepsilon^2k/16).
\end{equation}
On the other hand, if $c=5/2-\sqrt6>1/20$, then $\sqrt c+c/2=1/4$ and
$$
2\sqrt{kt}+2t\le \varepsilon k/2
$$
for $t=c\varepsilon^2 k$, which finally gives
\begin{equation}\label{eq:fin4}
{\mathbb P}_a(I\ge (1+\varepsilon/2)k)\le \exp(-c\varepsilon^2 k).
\end{equation}

Next, we estimate the moments of the off-diagonal part $II$.
We use Lemma \ref{decoup} twice, which gives
$$
\E_{a,\varkappa} |II|^p\le 16^p \E_{a,a',\varkappa,\varkappa'} |II'|^p:=
16^p \E_{a,a',\varkappa,\varkappa'}
\biggl|\sum_{j=0}^{k-1}\sum_{i\not=i'} a_i a'_{i'}\varkappa_{j+i}\varkappa'_{j+i'}x_{j+i}x_{j+i'}\biggr|^p,
$$
where $a'$ and $\varkappa'$ are independent copies of $a$ and $\varkappa$, respectively.

First, we make a substitution $v=j+i, v'=j+i'$ and use the Khintchine inequality 
with the optimal constant $c_p\le \sqrt p$ and the random variable $\varkappa$ to obtain
\begin{align*}
{\mathbb E}_{\varkappa}
\biggl|\sum_{j=0}^{k-1}\sum_{i\not=i'}
a_i a'_{i'}\varkappa_{j+i}\varkappa'_{j+i'}x_{j+i}x_{j+i'}\biggr|^p&=
{\mathbb E}_{\varkappa}
\biggl|\sum_{v=0}^{d-1} \varkappa_v x_v
\sum_{v'\not=v} \varkappa'_{v'}x_{v'}
\sum_{j=0}^{k-1} a_{v-j} a'_{v-j'}\biggr|^p\\
&\le c_p^p \biggl(\sum_{v=0}^{d-1}x_v^2 \Bigl(\sum_{v'\not=v} \varkappa'_{v'}x_{v'}
\sum_{j=0}^{k-1} a_{v-j} a'_{v-j'}\Bigr)^2\biggr)^{p/2}.
\end{align*}

Next, we involve Minkowski's inequality with respect to $p/2\ge 1$ and Khintchine's
inequality for the random variable $\varkappa'.$
\begin{align*}
{\mathbb E}_{\varkappa, \varkappa'}|II'|^p
&\le c_p^p\, {\mathbb E}_{\varkappa'}\biggl(\sum_{v=0}^{d-1}x_v^2 \Bigl(\sum_{v'\not=v} \varkappa_{v'}x_{v'}
\sum_{j=0}^{k-1} a_{v-j} a'_{v-j'}\Bigr)^2\biggr)^{p/2}\\
&\le c_p^p \biggl(\sum_{v=0}^{d-1}x_v^2\biggl(
{\mathbb E}_{\varkappa'}\Bigl|\sum_{v'\not=v}\varkappa_{v'}x_{v'}
\sum_{j=0}^{k-1} a_{v-j} a'_{v-j'}\Bigr|^p\biggr)^{2/p}\biggr)^{p/2}\\
&\le c_p^{2p}\biggl(\sum_{v\not=v'}x_v^2x_{v'}^2
\Bigl(\sum_{j=0}^{k-1}a_{v-j}a'_{v'-j}\Bigr)^2\biggr)^{p/2}.
\end{align*}

Furthermore, the Minkowski inequality for $a$ and $a'$ gives
$$
{\mathbb E_{a,a',\varkappa,\varkappa'}}|II'|^p\le c_p^{2p}
\biggl(\sum_{v\not=v'}x_v^2x_{v'}^2\Bigl({\mathbb E}_{a,a'}
\Bigl|\sum_{j=0}^{k-1}a_{v-j}a'_{v'-j}\Bigr|^p\Bigr)^{2/p}\biggr)^{p/2}.
$$
If $a_0,\dots,a_{d-1}$ are Bernoulli variables, then Khintchine's inequality gives
$$
\biggl({\mathbb E}_{a,a'}\Bigl|\sum_{j=0}^{k-1}a_{v-j}a'_{v'-j}\Bigr|^p\biggr)^{1/p}\le \sqrt{kp},
$$
as the product of two independent Bernoulli variables
is again of this type.

For normal variables, we use first Khintchine's inequality and spherical coordinates
to obtain
\begin{align}\notag
{\mathbb E}_{a,a'}\Bigl|\sum_{j=0}^{k-1}a_{v-j}a'_{v'-j}\Bigr|^p&={\mathbb E}_{a,a'}\Bigl|\sum_{j=0}^{k-1}a_ja'_j\Bigr|^p
\le c_p^p {\mathbb E}_{a}\Bigl(\sum_{j=0}^{k-1}|a_j|^2\Bigr)^{p/2}\\
\label{eq:chaos}&=c_p^p{\mathbb E}_{a}||a||_2^p
=\frac{c_p^p}{(2\pi)^{k/2}}\int_{\R^k}e^{-||a||^2_2/2}||a||_2^pda\\
\notag &=\frac{c_p^p}{(2\pi)^{k/2}}\cdot A_k\cdot \int_0^\infty e^{-r^2/2}r^{p+k-1}dr,
\end{align}
where
$$
A_k=\frac{2\pi^{k/2}}{\Gamma(k/2)}
$$
is the area of the unit ball in $\R^k$.

We combine \eqref{eq:chaos} with Stirling's inequality and obtain
$$
\biggl({\mathbb E}_{a,a'}\Bigl|\sum_{j=0}^{k-1}a_{v-j}a'_{v'-j}\Bigr|^p\biggr)^{1/p}
\le \sqrt 2 c_p \biggl[\frac{\Gamma((k+p)/2)}{\Gamma(k/2)}\biggr]^{1/p}\le c\sqrt{p(k+p)}.
$$

Hence, if $a_0,\dots,a_{d-1}$ are independent Bernoulli or normally distributed variables, we may estimate
\begin{equation}\label{eq:chaos2}
\left({\mathbb E_{a,a',\varkappa,\varkappa'}}|II'|^p\right)^{1/p}\le c p \cdot \sqrt{(k+p)p}\cdot||x||^2
=c p^{3/2}\sqrt {k+p}.
\end{equation}
Markov's inequality then gives
$$
{\mathbb P}_{a,a',\varkappa,\varkappa'}(|II'|>k\varepsilon/2)=
{\mathbb P}_{a,a',\varkappa,\varkappa'}\biggl(\frac{2^p|II'|^p}{k^p\varepsilon^p}\ge 1\biggr)
\le\frac{2^p{\mathbb E}_{a,a',\varkappa,\varkappa'}|II'|^p}{k^p\varepsilon^p}
\le \left(\frac{cp^{3/2}\sqrt{k+p}}{k\varepsilon}\right)^{p}.
$$
We choose $p$ by the condition $\frac{\sqrt {2}cp^{3/2}}{\sqrt k\varepsilon}=e^{-1}$. 
We may assume $c\ge 1$, which ensures that $p\le k$ and 
$\frac{\sqrt{k+p}}{k}\le \frac{\sqrt 2}{\sqrt k}$, which leads to
\begin{equation}\label{eq:fin5}
{\mathbb P}_{a,a',\varkappa,\varkappa'}(|II'|>k\varepsilon/2)\le \exp(-c'(k\varepsilon^2)^{1/3}).
\end{equation}
The proof then follows by \eqref{eq:fin1} and \eqref{eq:fin2}
combined with \eqref{eq:fin3}, \eqref{eq:fin4} and \eqref{eq:fin5}.
\end{proof}

The proof of Theorem \ref{thm1} follows from Lemma \ref{lem1} by the union bound over all $\binom{n}{2}$ pairs of points.

\begin{rem} (i) We note that \eqref{eq:chaos2} follows directly by very well known estimates
of moments of Gaussian chaos, cf. \cite{HW,L}. We preferred to give a simple and direct proof.

(ii) Let us also mention that Lemma \ref{lem1} fails, if the multiplication with $D_\varkappa$
is omitted. Namely, let $k\le d$ be natural numbers,
let $a_0,\dots,a_{d-1}$ be independent normal variables and let $x=\frac{1}{\sqrt d} (1,\dots,1)$.
If $f(x)=M_{a,k}x$, then
$$
||f(x)||_2^2=k\Bigl(\sum_{j=0}^{d-1}\frac{a_j}{\sqrt d}\Bigr)^2.
$$
Due to the 2-stability of the normal distribution, the variable
$$
b:=\sum_{j=0}^{d-1}\frac{a_j}{\sqrt d}
$$
is again normally distributed, i.e. $b\approx N(0,1).$ Hence
$$
{\mathbb P}_a\Bigl(||f(x)||_2^2>(1+\varepsilon)k\Bigr)=
{\mathbb P}_b\Bigl(b^2>(1+\varepsilon)\Bigr)
$$
depends neither on $k$ nor on $d$ and Lemma \ref{lem1} cannot hold.

(iii) The statement of Theorem \ref{thm1} holds also for matrices with Toeplitz structure. The proof
is literally the same, only notational changes are necessary.
\end{rem}

{\bf Acknowledgement:} 
We thank Albrecht B\"ottcher for his comments to the topic.
The research of Aicke Hinrichs was supported by the DFG Heisenberg grant HI 584/3-2.
Jan Vyb\'\i ral acknowledges the financial
support provided by the FWF project Y 432-N15 START-Preis “Sparse Approximation
and Optimization in High Dimensions”.

\thebibliography{99}
\bibitem{A} D.~Achlioptas, Database-friendly random projections: Johnson-Lindenstrauss with binary coins.
\emph{J. Comput. Syst. Sci.}, 66(4):671-687, 2003.

\bibitem{AC2} N.~Ailon and B.~Chazelle, Approximate nearest neighbors and the fast Johnson-Lindenstrauss transform.
In \emph{Proc. 38th Annual ACM Symposium on Theory of Computing}, 2006.

\bibitem{AC} N.~Ailon and B.~Chazelle, The fast Johnson-Lindenstrauss transform
and approximate nearest neighbors. \emph{SIAM J. Comput.} 39 (1), 302-322, 2009.

\bibitem{B1} W. Bajwa, J. Haupt, G. Raz, S. Wright and R. Nowak, Toeplitz-structured
compressed sensing matrices. \emph{IEEE Workshop SSP}, 2007.

\bibitem{B2} W. U. Bajwa, J. Haupt, G. Raz and R. Nowak, Compressed channel
sensing. In \emph{Proc. CISS’08}, Princeton, 2008.

\bibitem{BT} J. Bourgain and L. Tzafriri, Invertibility of ’large’ submatrices with
applications to the geometry of Banach spaces and harmonic analysis.
\emph{Israel J. Math.}, 57(2):137-224, 1987.

\bibitem{DG} S.~Dasgupta and A.~Gupta, An elementary proof of a theorem of Johnson and
Lindenstrauss. \emph{Random. Struct. Algorithms}, 22:60-65, 2003.


\bibitem{HW} D. L. Hanson and F. T. Wright, A bound on tail probabilities for quadratic forms
in independent random variables, \emph{Ann. Math. Statist.} 42:1079-1083, 1971.

\bibitem{IM} P.~Indyk and R.~Motwani, Approximate nearest neighbors: Towards removing the curse of
dimensionality. In \emph{Proc. 30th Annual ACM Symposium on Theory of Computing}, pp. 604-613, 1998.

\bibitem{IN} P.~Indyk and A.~Naor, Nearest neighbor preserving embeddings.
\emph{ACM Trans. Algorithms}, 3(3), Article no. 31, 2007.

\bibitem{JL} W.~B.~Johnson and J.~Lindenstrauss, Extensions of Lipschitz mappings into a Hilbert space.
\emph{Contem. Math.}, 26:189-206, 1984.

\bibitem{L} R. Lata\l a, Estimates of moments and tails of Gaussian chaoses,
\emph{Ann. Prob.} 34(6):2315-2331, 2006.

\bibitem{LM} B.~Laurent and P. Massart, Adaptive estimation of a quadratic functional 
by model selection.  \emph{Ann. Statist.}  28(5):1302--1338, 2000.

\bibitem{M} J.~Matou\v{s}ek, On variants of the Johnson-Lindenstrauss lemma,
\emph{Random Struct. Algorithms}  33(2):142--156, 2008.

\bibitem{R} H.~Rauhut, Circulant and Toeplitz matrices in compressed sensing, 
In \emph{Proc. SPARS'09}, Saint-Malo, France, 2009.

\end{document}